\numberwithin{equation}{section}
\newtheorem{theorem}{Theorem}[section]
\newtheorem{lemma}{Lemma}[section]
\newtheorem{proposition}{Proposition}[section]
\theoremstyle{definition}
\theoremstyle{remark}
\author{S.\,M. Bakiev}
\address{Department of Differential Equations,
Faculty of Mechanics and Mathematics,
Mo\-s\-cow Lo\-mo\-no\-sov State University,
Vorobyovy Gory,
Moscow, 119992 Russia}
\email{pifagorgor@gmail.com}
\author{A.\,A. Kon'kov}
\address{Department of Differential Equations,
Faculty of Mechanics and Mathematics,
Mo\-s\-cow Lo\-mo\-no\-sov State University,
Vorobyovy Gory,
Moscow, 119992 Russia}
\email{konkov@mech.math.msu.su}
\title[On the existence of solutions]{
On the existence of solutions of the Dirichlet problem for $p$-Laplacian on Riemannian manifolds
}
\thanks{}
\date{}
\begin{document}

\begin{abstract}
We obtain a criterion for the existence of solutions of the problem
$$
	\Delta_p u = 0
	\quad
	\mbox{in } M \setminus \partial M,
	\quad
	\left.
		u
	\right|_{
		\partial M
	}
	=
	h,
$$
with the bounded Dirichlet integral, where $M$ is an oriented complete Riemannian manifold with boundary and $h \in W_{p, loc}^1 (M)$, $p > 1$.
\end{abstract}

\maketitle

\section{Introduction}
\label{sec1}

Let $M$ be an oriented complete Riemannian manifold with boundary.
We consider solutions of the problem
\begin{equation}
	\Delta_p u = 0
	\quad
	\mbox{in } M \setminus \partial M,
	\label{1.1}
\end{equation}
\begin{equation}
	\left.
		u
	\right|_{
		\partial M
	}
	=
	h,
	\label{1.2}
\end{equation}
where
$
	\Delta_p u
	=
	\nabla_i
	(
		g^{ij}
		|\nabla u|^{p - 2}
		\nabla_j u
	)
$
is the $p$-Laplacian and $h \in W_{p, loc}^1 (M)$, $p > 1$.

As a condition at infinity, we assume that the Dirichlet integral is bounded, i.e.
\begin{equation}
	\int_M
	|\nabla u|^p
	\,
	dV
	<
	\infty.
	\label{1.3}
\end{equation}

As is customary, by $g_{ij}$ we denote the metric tensor consistent with the Rie\-ma\-ni\-an connection and by $g^{ij}$ we denote the tensor dual to the metric one.
In so doing, $|\nabla u| = (g^{ij} \nabla_i u \nabla_j u)^{1/2}$.
As in~\cite{LU}, by $W_{p, loc}^1 (\omega)$, where $\omega \subset M$ is an open set, we mean the space of measurable functions belonging to $W_p^1 (\omega' \cap \omega)$ for any open set $\omega' \subset M$ with compact closure.
The space $L_{p, loc} (\omega)$ is defined analogously.

A function $u \in W_{p, loc}^1 (M)$ is called a solution of~\eqref{1.1} if
\begin{equation}
	\int_M
	g^{ij}
	|\nabla u|^{p - 2}
	\nabla_j u
	\nabla_i \varphi
	\,
	dV
	=
	0
	\label{1.6}
\end{equation}
for all $\varphi \in C_0^\infty (M \setminus \partial M)$, where $dV$ is the volume element of the manifold $M$.
In its turn, condition~\eqref{1.2} means that
$
	(u - h) \psi
	\in 
	{\stackrel{\rm \scriptscriptstyle o}{W}\!\!{}_p^1 (M \setminus \partial M)}
$
for all $\psi \in C_0^\infty (M)$.

Boundary value problems for differential equations in unbounded domains and on smooth manifolds have been studied by a number of authors~\cite{BK}--\cite{Kudryavtsev}, \cite{MPPMA2011}.
In the case where $M$ is a domain in ${\mathbb R}^n$ bounded by a surface of revolution, a criterion for the existence of solutions of~\eqref{1.1}--\eqref{1.3} was obtained in~\cite {MPPMA2011}.
However, the method used in~\cite {MPPMA2011} cannot be generalized to the case of an arbitrary Riemannian manifold.
Theorem~\ref{t2.1} proved in our article does not have this shortcoming.

Let $K \subset M$ be a compact set. We denote by $C_0^\infty (M, K)$ the set of functions from $C^\infty (M)$ that are equal to zero in a neighborhood of $K$.
In its turn, by ${\stackrel{\rm \scriptscriptstyle o}{W}\!\!{}_p^1 (\omega, K)}$, where $\omega$ is an open subset of $M$, we denote the closure of $C_0^\infty (M, K) \cap W_p^1 (\omega)$ in $W_p^1 (\omega)$.
By definition, a function $\varphi \in W_{p, loc}^1 (M)$ satisfies the condition
\begin{equation}
	\left.
		\varphi
	\right|_{
		K
	}
	=
	\psi,
	\label{1.4}
\end{equation}
where $\psi \in W_{p, loc}^1 (M)$, if
$
	\varphi - \psi 
	\in 
	{\stackrel{\rm \scriptscriptstyle o}{W}\!\!{}_p^1 (\omega, K)}
$
for some open set $\omega$ containing $K$.

\begin{proposition}\label{p1.1}
A function $u \in W_{p, loc}^1 (\Omega)$ satisfies~\eqref{1.2} if and only if
\begin{equation}
	\left.
		u
	\right|_{
		K
	}
	=
	h
	\label{p1.1.1}
\end{equation}
for any compact set $K \subset \partial M$.
\end{proposition}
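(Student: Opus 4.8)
Set $w = u - h \in W_{p, loc}^1 (M)$. Condition~\eqref{1.2} asks that $w \psi \in {\stackrel{\rm \scriptscriptstyle o}{W}\!\!{}_p^1 (M \setminus \partial M)}$ for every $\psi \in C_0^\infty (M)$, while~\eqref{p1.1.1} for a compact set $K \subset \partial M$ asks that $w \in {\stackrel{\rm \scriptscriptstyle o}{W}\!\!{}_p^1 (\omega, K)}$ for some open $\omega \supset K$. The plan is to verify the equivalence of these two descriptions directly from the definitions of the two closures, exploiting the decisive fact that a function in $C_0^\infty (M \setminus \partial M)$, extended by zero, is smooth on all of $M$ and vanishes in a neighbourhood of the whole of $\partial M$, hence lies in $C_0^\infty (M, K)$ for every compact $K \subset \partial M$. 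For the forward implication I would fix a compact $K \subset \partial M$, pick $\psi \in C_0^\infty (M)$ equal to $1$ on an open set $\omega$ with compact closure and $K \subset \omega$, and take $\varphi_n \in C_0^\infty (M \setminus \partial M)$ with $\varphi_n \to w \psi$ in $W_p^1 (M \setminus \partial M)$. Each $\varphi_n$ belongs to $C_0^\infty (M, K)$ by the remark above; restricting to $\omega$, where $w \psi = w$, gives $\varphi_n \to w$ in $W_p^1 (\omega)$ (since $\partial M$ is a null set, the norms on $\omega$ and $\omega \setminus \partial M$ coincide), whence $w \in {\stackrel{\rm \scriptscriptstyle o}{W}\!\!{}_p^1 (\omega, K)}$, i.e.~\eqref{p1.1.1}.

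For the converse I would fix $\psi \in C_0^\infty (M)$ and apply the hypothesis to the specific compact set $K = \supp \psi \cap \partial M$, obtaining an open $\omega \supset K$ and functions $\chi_n \in C_0^\infty (M, K) \cap W_p^1 (\omega)$ with $\chi_n \to w$ in $W_p^1 (\omega)$. Choosing $\theta \in C_0^\infty (M)$ with $\supp \theta \subset \omega$ and $\theta \equiv 1$ near $K$, I would split $w \psi = w \theta \psi + w (1 - \theta) \psi$. For the first term, $\chi_n \theta \psi \to w \theta \psi$ in $W_p^1 (\omega)$ by continuity of multiplication by the fixed smooth compactly supported factor $\theta \psi$; the crucial point is that $\supp (\chi_n \theta \psi) \cap \partial M \subset \supp \psi \cap \partial M = K$ while $\chi_n$ vanishes near $K$, so $\chi_n \theta \psi$ in fact vanishes near all of $\partial M$ and therefore lies in $C_0^\infty (M \setminus \partial M)$. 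As these functions are supported in a fixed compact subset of $\omega$, the convergence persists in $W_p^1 (M \setminus \partial M)$, giving $w \theta \psi \in {\stackrel{\rm \scriptscriptstyle o}{W}\!\!{}_p^1 (M \setminus \partial M)}$.

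The second term $w (1 - \theta) \psi$ is supported in $\supp \psi \cap \supp (1 - \theta)$, a compact set disjoint from $\partial M$ because $1 - \theta$ vanishes near $K \supset \supp \psi \cap \partial M$; hence it is a $W_p^1$ function with compact support in the open manifold $M \setminus \partial M$ and can be approximated in $W_p^1$ by $C_0^\infty (M \setminus \partial M)$ functions through a standard partition-of-unity-and-mollification argument in local charts, so it too lies in ${\stackrel{\rm \scriptscriptstyle o}{W}\!\!{}_p^1 (M \setminus \partial M)}$. Adding the two pieces yields~\eqref{1.2}. I expect the main obstacle to be the reverse direction, and more precisely the bookkeeping that makes the key trick work: one must take $K$ to be exactly $\supp \psi \cap \partial M$ rather than an arbitrary compact piece, so that the approximants $\chi_n$, which a priori only vanish near $K$, are forced to vanish near the entire boundary once multiplied by $\theta \psi$; and one must peel off the genuinely interior part, on which the local convergence over $\omega$ gives no control, and dispose of it by mollification instead.
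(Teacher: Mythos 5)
Your argument is correct and follows essentially the same route as the paper's own proof: the forward direction is identical, and in the converse you likewise take $K=\supp\psi\cap\partial M$, split $w\psi$ by a cutoff into a piece near the boundary (approximated by the given sequence multiplied by the cutoff, which lands in $C_0^\infty(M\setminus\partial M)$) and an interior compactly supported piece handled by mollification. The paper's cutoff $\tau$ is supported in the interior and equal to $1$ near $\supp\psi\setminus\omega$, whereas yours is supported in $\omega$ and equal to $1$ near $K$ --- complementary but equivalent bookkeeping.
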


\begin{proof}
At first, let~\eqref{1.2} hold and $K$ be a compact subset of~$\partial M$.
Take an open pre-compact set $\omega$ containing $K$ and a function $\psi \in C_0^\infty (M)$ such that
$$
	\left.
		\psi
	\right|_\omega
	=
	1.
$$
By~\eqref{1.2}, the function $(u - h) \psi$ belongs to the closure of $C_0^\infty (M \setminus \partial M)$ in the space $W_p^1 (M \setminus \partial M)$. Assuming that functions from ${C_0^\infty (M \setminus \partial M)}$ are extended by zero to $\partial M$, we obtain
$
	u - h
	\in 
	{\stackrel{\rm \scriptscriptstyle o}{W}\!\!{}_p^1 (\omega, K)}.
$

Now, assume that condition~\eqref{p1.1.1} is valid and let $\psi \in C_0^\infty (M)$.
We consider the compact set $K = \operatorname{supp} \psi \cap \partial M$.
In view of~\eqref{p1.1.1}, there exists an open set $\omega$ such that
$K \subset \omega$ and, moreover,
$
	u - h
	\in 
	{\stackrel{\rm \scriptscriptstyle o}{W}\!\!{}_p^1 (\omega, K)}
$
or, in other words, 
\begin{equation}
	\| u - h - \varphi_i \|_{
		W_p^1 (\omega)
	}
	\to
	0
	\quad
	\mbox{as } i \to \infty
	\label{pp1.1.1}
\end{equation}
for some sequence of functions
$\varphi_i \in C_0^\infty (M, K) \cap W_p^1 (\omega)$, $i = 1,2,\ldots$.
We denote $\tilde K = \operatorname{supp} \psi \setminus \omega$. 
Since $\tilde K$ is a compact set belonging to $M\setminus\partial M$, there is a function $\tau \in C_0^\infty (M \setminus \partial M)$ equal to one in a neighborhood of $\tilde K$.
It is easy to see that
$
	(1 - \tau) \psi \varphi_i 
	\in
	C_0^\infty (\omega \setminus \partial M),
$
$i = 1,2,\ldots$.
At the same time, by~\eqref{pp1.1.1}, we have
$$
	\| 
		(1 - \tau) \psi (u - h - \varphi_i)
	\|_{
		W_p^1 (M)
	}
	=
	\| 
		(1 - \tau) \psi (u - h - \varphi_i)
	\|_{
		W_p^1 (\omega)
	}
	\to
	0
	\quad
	\mbox{as } i \to \infty;
$$
therefore, one can assert that
$
	(1 - \tau) \psi (u - h) 
	\in 
	{\stackrel{\rm \scriptscriptstyle o}{W}\!\!{}_p^1 (M \setminus \partial M)}.
$
It is also obvious that
$
	\tau \psi (u - h) 
	\in 
	{\stackrel{\rm \scriptscriptstyle o}{W}\!\!{}_p^1 (M \setminus \partial M)}.
$
Thus, we obtain
$
	\psi (u - h) 
	=
	(1 - \tau) \psi (u - h) 
	+
	\tau \psi (u - h) 
	\in 
	{\stackrel{\rm \scriptscriptstyle o}{W}\!\!{}_p^1 (M \setminus \partial M)}.
$
\end{proof}

Let $\Omega$ be an open subset of $M$.
The capacity of a compact set $K \subset M$ associated with a function $\psi \in W_{p, loc}^1 (M)$ is defined as
$$
	\operatorname{cap}_\psi (K, \Omega)
	=
	\inf_\varphi
	\int_\Omega
	|\nabla \varphi|^p
	dV,
$$
where the infimum is taken over all functions
$
	\varphi
	\in
	{\stackrel{\rm \scriptscriptstyle o}{W}\!\!{}_p^1 (\Omega)}
$
for which~\eqref{1.4} is valid. In so doing, we assume that the functions from
${\stackrel{\rm \scriptscriptstyle o}{W}\!\!{}_p^1 (\Omega)}$ are
extended by zero beyond $\Omega$.
For an arbitrary closed set $E \subset M$, we put
$$
\operatorname{cap}_\psi (E, \Omega)
=
\sup_K
\operatorname{cap}_\psi (K, \Omega),
$$
where the supremum is taken over all compact sets $K \subset E$.
If $\Omega = M$, we write $\operatorname{cap}_\psi (K)$ instead of $\operatorname{cap}_\psi (K, M)$.
In the case of $\psi = 1$ and $p = 2$, the capacity $\operatorname{cap}_\psi (K)$ coincides with the well-known Wiener capacity~\cite{Landkof}.

It is not difficult to verify that the capacity introduced above has the following natural properties.

\begin{enumerate}
\item[(a)]
Let
$K_1 \subset K_2$ 
and
$\Omega_2 \subset \Omega_1$,
then
$$
	\operatorname{cap}_\psi (K_1,\Omega_1) \le \operatorname{cap}_\psi (K_2,\Omega_2).
$$

\item[(b)]
Suppose that $\lambda$ is a real number, then
$$
	\operatorname{cap}_{\lambda \psi} (K, \Omega) = |\lambda|^p \operatorname{cap}_\psi (K, \Omega).
$$

\item[(c)]
Let $\psi_1, \psi_2 \in W_{p, loc}^1 (M)$, then
$$
	\operatorname{cap}_{\psi_1 + \psi_2}^{1 / p} (K, \Omega) 
	\le
	\operatorname{cap}_{\psi_1}^{1 / p} (K, \Omega)
	+
	\operatorname{cap}_{\psi_2}^{1 / p} (K, \Omega).
$$
\end{enumerate}

We say that $u \in W_{p, loc}^1 (M)$ is a solution of~\eqref{1.1} under the condition
\begin{equation}
	\left.
		\frac{\partial u}{\partial \nu}
	\right|_{\partial M}
	=
	0
	\label{1.5}
\end{equation}
if the integral identity~\eqref{1.6} holds for all $\varphi \in C_0^\infty (M)$. 
The set of solutions of problem~\eqref{1.1}, \eqref{1.5} with bounded Dirichlet integral~\eqref{1.3} is denoted by $\mathfrak{H}$.

\section{Main result}
\begin{theorem}\label{t2.1}
Problem~\eqref{1.1}--\eqref{1.3} has a solution if and only if
\begin{equation}
	\operatorname{cap}_{h - w} (\partial M)
	<
	\infty
	\label{t2.1.1}
\end{equation}
for some $w \in \mathfrak{H}$.
\end{theorem}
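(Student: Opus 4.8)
The plan is to recast the whole statement as the assertion that problem~\eqref{1.1}--\eqref{1.3} is solvable if and only if the affine class
\[
\mathcal A_h = \{\, v \in W_{p,loc}^1(M) : v|_{\partial M} = h, \ \textstyle\int_M |\nabla v|^p\, dV < \infty \,\}
\]
is nonempty. Indeed, if $\mathcal A_h \neq \emptyset$, then a solution is obtained as a minimizer of the Dirichlet integral over $\mathcal A_h$: for any $\varphi \in C_0^\infty(M \setminus \partial M)$ the perturbation $v + t\varphi$ again lies in $\mathcal A_h$, so the first variation of $\int_M |\nabla v|^p\, dV$ vanishes, which is precisely the integral identity~\eqref{1.6}, while the constraint $v|_{\partial M} = h$ is~\eqref{1.2}. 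Conversely, any solution of~\eqref{1.1}--\eqref{1.3} already belongs to $\mathcal A_h$. The same variational principle describes $\mathfrak H$: any minimizer of the Dirichlet integral over an affine class $v_0 + \stackrel{\rm \scriptscriptstyle o}{W}\!\!{}_p^1 (M)$ satisfies the Euler--Lagrange identity tested against all $\varphi \in C_0^\infty(M) \subset \stackrel{\rm \scriptscriptstyle o}{W}\!\!{}_p^1 (M)$, which yields~\eqref{1.6} for $\varphi \in C_0^\infty(M)$, i.e.~\eqref{1.1},~\eqref{1.5}, so such a minimizer lies in $\mathfrak H$.

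For the necessity, suppose $u$ solves~\eqref{1.1}--\eqref{1.3}. First I would minimize $\int_M |\nabla v|^p\, dV$ over the affine set $u + \stackrel{\rm \scriptscriptstyle o}{W}\!\!{}_p^1 (M)$ and call a minimizer $w$. By the previous paragraph $w \in \mathfrak H$, and $\int_M |\nabla w|^p\, dV \le \int_M |\nabla u|^p\, dV < \infty$, while $u - w \in \stackrel{\rm \scriptscriptstyle o}{W}\!\!{}_p^1 (M)$. I would then use $u - w$ as a single competitor in the capacity: for every compact $K \subset \partial M$ one has $(u - w) - (h - w) = u - h$, and Proposition~\ref{p1.1} gives $u - h \in \stackrel{\rm \scriptscriptstyle o}{W}\!\!{}_p^1 (\omega, K)$, so that $u - w$ is admissible for $\operatorname{cap}_{h - w}(K, M)$. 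Hence $\operatorname{cap}_{h - w}(K, M) \le \int_M |\nabla(u - w)|^p\, dV$ uniformly in $K$, and taking the supremum over $K$ yields~\eqref{t2.1.1}.

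For the sufficiency, suppose $\operatorname{cap}_{h - w}(\partial M) = C < \infty$ for some $w \in \mathfrak H$. I would exhaust the $\sigma$-compact boundary $\partial M$ by compacta $K_1 \subset K_2 \subset \cdots$ and choose near-optimal competitors $\varphi_j \in \stackrel{\rm \scriptscriptstyle o}{W}\!\!{}_p^1 (M)$ with $\varphi_j|_{K_j} = (h - w)|_{K_j}$ and $\int_M |\nabla \varphi_j|^p\, dV \le C + 1$. Passing to a weak limit $g$, the energy bound survives, and $g$ inherits $g|_{K_j} = (h - w)|_{K_j}$ for each $j$, since for $i \ge j$ the condition $\varphi_i - (h - w) \in \stackrel{\rm \scriptscriptstyle o}{W}\!\!{}_p^1 (\omega_j, K_j)$ defines a weakly closed convex set; hence $g|_{\partial M} = (h - w)|_{\partial M}$ by Proposition~\ref{p1.1}. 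Then $g + w \in \mathcal A_h$, so $\mathcal A_h \neq \emptyset$ and a minimizer over $\mathcal A_h$ solves~\eqref{1.1}--\eqref{1.3}.

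The main obstacle is the existence of the minimizers and weak limits used above, since on the noncompact $M$ only the gradient is under control. I expect to resolve this by the direct method: the functional $v \mapsto \int_M |\nabla v|^p\, dV$ is convex, hence weakly lower semicontinuous, and by reflexivity of $L_p$ (here $p > 1$ is essential) a minimizing sequence has gradients converging weakly in $L_p$. The genuine difficulty is to identify this weak limit as the gradient of an admissible function and to transfer the boundary constraint to the limit; I would handle this by local compactness (a Rellich--Kondrachov argument on a precompact exhaustion) together with a diagonal extraction, using that the constraints $v - v_0 \in \stackrel{\rm \scriptscriptstyle o}{W}\!\!{}_p^1 (M)$ and $v - (h - w) \in \stackrel{\rm \scriptscriptstyle o}{W}\!\!{}_p^1 (\omega, K)$ are weakly closed. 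An equivalent and perhaps cleaner route on a complete manifold is to solve the problems on precompact exhausting domains $M_k$ and pass to the limit, with~\eqref{t2.1.1} furnishing the uniform energy estimate and the monotonicity of the $p$-Laplacian giving the convergence.
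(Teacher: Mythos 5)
Your overall architecture coincides with the paper's (variational characterization of $\mathfrak H$, an explicit competitor for the capacity in the necessity direction, near-optimal competitors glued into a finite-energy function with the right trace in the sufficiency direction, reflexivity of $L_p$ for $p>1$ supplying the limits), but there is a genuine gap at exactly the point where the paper concentrates all of its effort. In your necessity argument you minimize $\int_M|\nabla v|^p\,dV$ over the affine class $u+{\stackrel{\rm \scriptscriptstyle o}{W}\!\!{}_p^1 (M)}$ and assert that a minimizer $w$ exists \emph{inside that class}, so that $u-w\in{\stackrel{\rm \scriptscriptstyle o}{W}\!\!{}_p^1 (M)}$ can serve as a single global competitor for $\operatorname{cap}_{h-w}(K)$. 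Admissibility for the capacity requires membership in ${\stackrel{\rm \scriptscriptstyle o}{W}\!\!{}_p^1 (M)}$ and not only the trace identity $(u-w)-(h-w)=u-h$ on $K$, and this membership is precisely what the direct method does not give you: a minimizing sequence $\varphi_i\in C_0^\infty(M)$ is bounded only in the gradient seminorm, not in $W_p^1(M)$, so the weak closedness of $u+{\stackrel{\rm \scriptscriptstyle o}{W}\!\!{}_p^1 (M)}$ in $W_p^1(M)$ cannot be invoked, and Rellich--Kondrachov on a precompact exhaustion plus a diagonal argument yields a limit $v$ only in $W_{p,loc}^1(M)$ with $\nabla v\in L_p(M)$. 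Whether that limit lies in ${\stackrel{\rm \scriptscriptstyle o}{W}\!\!{}_p^1 (M)}$ is uncontrollable; the local averages $\alpha_m$ of the approximants may diverge, which is why the paper distinguishes the bounded and unbounded cases and performs the renormalization $v_m=r_m-\alpha_m r_{s_m}/\alpha_{s_m}$. The paper then never needs $v\in{\stackrel{\rm \scriptscriptstyle o}{W}\!\!{}_p^1 (M)}$: it tests the capacity with the truncated functions $\psi_j=\tau v+(1-\tau)r_{m_j}$, which are admissible because $r_{m_j}\in C_0^\infty(M)$ and $\tau v$ has compact support, and whose energies converge to $\int_M|\nabla v|^p\,dV$. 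Your sketch needs this (or an equivalent) cutoff device; as written, the step ``$u-w$ is admissible for $\operatorname{cap}_{h-w}(K,M)$'' is unsupported.

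A smaller instance of the same issue occurs in your sufficiency argument: ``passing to a weak limit $g$'' of the competitors $\varphi_j$ again requires an anchor, since only $\nabla\varphi_j$ is bounded in $L_p(M)$. Here the situation is better than in the necessity direction because the prescribed trace on a fixed compact portion of $\partial M$ pins the functions down locally: the paper fixes $\Omega_1$ with $\Omega_1\cap\partial M\ne\emptyset$ and uses the Friedrichs-type inequality $\int_{\Omega_1}|\varphi|^p\,dV\le C\int_{\Omega_1}|\nabla\varphi|^p\,dV$ for $\varphi$ vanishing on $\overline{\Omega}_1\cap\partial M$ to make the Mazur convex combinations fundamental in $L_p(\Omega_1)$, and then Lemma~\ref{l2.1} propagates this to every precompact Lipschitz domain. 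You should state this anchoring explicitly; your appeal to weak closedness of the sets $\{\varphi:\varphi-(h-w)\in{\stackrel{\rm \scriptscriptstyle o}{W}\!\!{}_p^1 (\omega_j,K_j)}\}$ is correct in $W_p^1(\omega_j)$ but only becomes usable after this local boundedness is established. The final minimization over your class $\mathcal A_h$ (perturbing by $C_0^\infty(M\setminus\partial M)$) is sound in outline and matches the paper's last step, since there the limit is only needed locally together with the global convergence of gradients.
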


The proof of Theorem~\ref{t2.1} is based on the following two lemmas known as Poincare's inequalities.

\begin{lemma}\label{l2.1}
Let $G \subset M$ be a pre-compact Lipschitz domain and $\omega $ be a subset of $G$ of non-zero measure. Then
$$
	\int_G
	|u|^p
	dV
	\le
	C
	\left(
		\int_G
		|\nabla u|^p
		dV
		+
		\left|
			\int_\omega
			u
			\,
			dV
		\right|^p
	\right)
$$
for all $u \in W_p^1 (G)$, where the constant $C > 0$ does not depend on $u$.
\end{lemma}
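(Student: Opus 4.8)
The plan is to argue by contradiction through a compactness argument, following the classical scheme for the Poincar\'e inequality. Suppose the estimate fails for every constant. Then there is a sequence $u_n \in W_p^1 (G)$, $n = 1, 2, \ldots$, with
$$
	\int_G |u_n|^p \, dV
	>
	n
	\left(
		\int_G |\nabla u_n|^p \, dV
		+
		\left| \int_\omega u_n \, dV \right|^p
	\right).
$$
Putting $v_n = u_n / \| u_n \|_{L_p (G)}$, I normalize so that $\| v_n \|_{L_p (G)} = 1$, while the right-hand side forces
$$
	\int_G |\nabla v_n|^p \, dV \to 0
	\quad
	\mbox{and}
	\quad
	\int_\omega v_n \, dV \to 0
	\quad
	\mbox{as } n \to \infty.
$$

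First I would observe that $\{ v_n \}$ is bounded in $W_p^1 (G)$. Since $G$ is a pre-compact Lipschitz domain, the Rellich--Kondrachov theorem yields a subsequence (not relabeled) converging strongly in $L_p (G)$ to some function $v$. Because $\nabla v_n \to 0$ in $L_p (G)$ as well, one may pass to the limit in the integral identity defining the weak gradient to conclude that $\nabla v = 0$ almost everywhere; in particular $v \in W_p^1 (G)$. As $G$ is connected, being a domain, it follows that $v$ equals a constant $c$.

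It remains to reach a contradiction. Strong $L_p$ convergence gives $\| v \|_{L_p (G)} = \lim_{n \to \infty} \| v_n \|_{L_p (G)} = 1$, so $c \neq 0$. At the same time, $\int_\omega v_n \, dV \to \int_\omega v \, dV$, and therefore $\int_\omega v \, dV = 0$, that is $c \, \mes \omega = 0$. Since $\mes \omega > 0$ by hypothesis, this forces $c = 0$, a contradiction that completes the argument.

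The step I expect to be the main obstacle is the compactness of the embedding $W_p^1 (G) \hookrightarrow L_p (G)$ in the manifold setting. This reduces to the Euclidean Rellich--Kondrachov theorem: covering the compact closure $\overline{G}$ by finitely many coordinate charts and using a subordinate partition of unity together with the Lipschitz regularity of $\partial G$, one transfers the problem to bounded Lipschitz domains in ${\mathbb R}^n$. The metric tensor and its inverse are bounded from above and below on $\overline{G}$, so the Sobolev norms computed in the charts are equivalent to the intrinsic ones, and the Euclidean compactness transfers back.
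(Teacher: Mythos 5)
Your argument is correct. Note that the paper does not prove Lemma~\ref{l2.1} at all: both lemmas are stated as ``known as Poincar\'e's inequalities'' and used as black boxes, so there is no proof to compare against. What you supply is the standard compactness proof by contradiction, and every step goes through: the normalization $v_n = u_n / \| u_n \|_{L_p(G)}$ is legitimate because the strict inequality forces $\| u_n \|_{L_p(G)} > 0$; the passage to the limit in the weak-derivative identity correctly yields $\nabla v = 0$, whence $v$ is constant since a domain is by definition connected; the convergence $\int_\omega v_n \, dV \to \int_\omega v \, dV$ follows from strong $L_p$ convergence and H\"older's inequality because $\operatorname{mes} \omega \le \operatorname{mes} G < \infty$; and the two conclusions $\| v \|_{L_p(G)} = 1$ and $c \, \operatorname{mes} \omega = 0$ with $\operatorname{mes} \omega > 0$ are indeed incompatible. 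The only substantive ingredient is the compactness of the embedding $W_p^1(G) \hookrightarrow L_p(G)$ for a pre-compact Lipschitz domain on a Riemannian manifold, and your reduction to the Euclidean Rellich--Kondrachov theorem via a finite atlas, a subordinate partition of unity, and the uniform two-sided bounds on the metric tensor over $\overline{G}$ is the standard and correct way to obtain it (the Lipschitz regularity of $\partial G$ guarantees the extension property needed in each chart). One small presentational point: a drawback of the contradiction argument is that it gives no control on the constant $C$, but the paper only needs existence of some $C$ independent of $u$, so this costs nothing here.
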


\begin{lemma}\label{l2.2}
Let $\omega \subset M$ be a pre-compact Lipschitz domain. Then
$$
	\int_\omega
	|\varphi - \alpha|^p
	\,
	dV
	\le
	C
	\int_\omega
	|\nabla \varphi|^p
	\,
	dV,
$$
for all $\varphi \in W_p^1 (\omega)$, where
$$
	\alpha
	=
	\frac{
		1
	}{
		\operatorname{mes} \omega
	}
	\int_\omega
	\varphi
	\,
	dV
$$
and the constant $C > 0$ does not depend on $\varphi$.
\end{lemma}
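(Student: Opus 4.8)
The plan is to obtain Lemma~\ref{l2.2} as an immediate special case of Lemma~\ref{l2.1}, applied to the function $\varphi - \alpha$ with the auxiliary subset taken to be the whole domain. First I would set $u = \varphi - \alpha$; since $\alpha$ is a constant, we have $u \in W_p^1 (\omega)$ and $\nabla u = \nabla \varphi$. Applying Lemma~\ref{l2.1} with $G = \omega$ and with the inner set (denoted $\omega$ in the statement of that lemma) chosen to coincide with $G = \omega$ --- which is legitimate because $\omega$ has non-zero measure --- yields
$$
	\int_\omega |u|^p \, dV
	\le
	C
	\left(
		\int_\omega |\nabla u|^p \, dV
		+
		\left| \int_\omega u \, dV \right|^p
	\right).
$$

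The key observation is that subtracting the mean value annihilates the second term on the right-hand side. Indeed, by the definition of $\alpha$,
$$
	\int_\omega u \, dV
	=
	\int_\omega \varphi \, dV - \alpha \operatorname{mes} \omega
	=
	\int_\omega \varphi \, dV - \int_\omega \varphi \, dV
	=
	0.
$$
Substituting this, together with $\nabla u = \nabla \varphi$, into the previous inequality gives precisely
$$
	\int_\omega |\varphi - \alpha|^p \, dV
	\le
	C \int_\omega |\nabla \varphi|^p \, dV,
$$
with the same constant $C$, which by Lemma~\ref{l2.1} depends only on $\omega$ and not on $\varphi$. This completes the argument.

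Since the reduction is this short, there is essentially no obstacle once Lemma~\ref{l2.1} is in hand; the only point to verify is the vanishing of the averaged term, which is immediate. For completeness I would record the alternative self-contained route, should one wish to prove the inequality directly rather than through Lemma~\ref{l2.1}: argue by contradiction, producing a sequence $\varphi_k$ with $\int_\omega |\varphi_k - \alpha_k|^p \, dV = 1$ and $\int_\omega |\nabla \varphi_k|^p \, dV \to 0$, where $\alpha_k$ is the mean of $\varphi_k$. The functions $\psi_k = \varphi_k - \alpha_k$ are then bounded in $W_p^1 (\omega)$, have zero mean, and satisfy $\nabla \psi_k \to 0$ in $L_p (\omega)$. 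Here the main obstacle is invoking the Rellich--Kondrachov compact embedding $W_p^1 (\omega) \hookrightarrow L_p (\omega)$, valid because $\omega$ is a pre-compact Lipschitz domain (one reduces to the Euclidean case via a finite atlas, using that on a pre-compact set the metric is uniformly comparable to the Euclidean one). Passing to a strongly $L_p$-convergent subsequence $\psi_k \to \psi$, weak lower semicontinuity of the functional $\varphi \mapsto \int_\omega |\nabla \varphi|^p \, dV$ forces $\nabla \psi = 0$, so $\psi$ is constant on the connected set $\omega$; the zero-mean condition then gives $\psi = 0$, contradicting $\|\psi\|_{L_p (\omega)} = 1$. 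Given the ordering of the lemmas, however, the first route is clearly preferable.
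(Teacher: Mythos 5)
Your proof is correct. There is, in fact, no proof in the paper to compare against: both Lemma~\ref{l2.1} and Lemma~\ref{l2.2} are stated without proof, introduced only as classical Poincar\'e inequalities (such statements can be found, e.g., in Maz'ya's monograph \cite{Mazya}), so your argument supplies what the paper omits. Your primary route --- apply Lemma~\ref{l2.1} with $G = \omega$, the inner set taken to be all of $\omega$ (legitimate, since a domain has non-zero measure and nothing in the statement excludes the improper subset), and $u = \varphi - \alpha$, so that $\nabla u = \nabla \varphi$ and the term $\left| \int_\omega u \, dV \right|^p$ vanishes by the definition of the mean --- is a valid one-line reduction, and the resulting constant depends only on $\omega$, as required. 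The one caveat is that this transfers rather than discharges the burden of proof, since Lemma~\ref{l2.1} is equally unproven in the paper; your self-contained alternative (normalize using the $p$-homogeneity of both sides, extract a strongly $L_p$-convergent subsequence via Rellich--Kondrachov on the pre-compact Lipschitz domain, then use weak lower semicontinuity of the gradient functional together with connectedness of the domain and the zero-mean condition to force the limit to vanish, contradicting the normalization) is the standard compactness proof and is what genuinely establishes the lemma independently. Both routes are sound; given that the paper orders Lemma~\ref{l2.1} first and treats it as known, the short reduction is the natural choice.
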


\begin{proof}[Proof of Theorem~$\ref{t2.1}$]
We show that the existence of a solution of~\eqref {1.1}--\eqref {1.3} implies the validity of~\eqref {t2.1.1}.
Consider a sequence of functions
$\varphi_i \in C_0^\infty (M)$, 
$i = 1,2,\ldots$, such that
$$
	\int_M
	|\nabla (u - \varphi_i)|^p
	dV
	\to
	\inf_{
		\varphi \in C_0^\infty (M)
	}
	\int_M
	|\nabla (u - \varphi)|^p
	dV
	\quad
	\mbox{as }
	i \to \infty.
$$
Since the sequence $\nabla \varphi_i $, $i = 1.2, \ldots $, is bounded in $L_p (M)$, there is a subsequence $\nabla \varphi_ {i_j}$, $j = 1,2,\ldots$, that converges weakly in $L_p (M)$ to some vector-function ${\mathbf r} \in L_p (M)$.
Let $R_m$ be the convex hull of the set $\{ \varphi_{i_j} \}_{j \ge m}$.
By Mazur's theorem, there exists a sequence $r_m \in R_m$, $m = 1,2,\ldots$, such that
\begin{equation}
	\| \nabla r_m - {\mathbf r} \|_{
		L_p (M)
	}
	\to
	0
	\quad
	\mbox{as }
	m \to \infty.
	\label{pt2.1.1}
\end{equation}
In view of the convexity of the functional
$$
	\varphi 
	\mapsto 
	\int_M
	|\nabla (u - \varphi)|^p
	dV,
	\quad
	\varphi
	\in
	{\stackrel{\rm \scriptscriptstyle o}{W}\!\!{}_p^1 (M)},
$$
we have
$$
	\int_M
	|\nabla (u - r_m)|^p
	dV
	\le
	\sup_{
		j \ge m
	} 
	\int_M
	|\nabla (u - \varphi_{i_j})|^p
	dV;
$$
therefore,
$$
	\int_M
	|\nabla (u - r_m)|^p
	dV
	\to
	\inf_{
		\varphi \in C_0^\infty (M)
	}
	\int_M
	|\nabla (u - \varphi)|^p
	dV
	\quad
	\mbox{as }
	m \to \infty.
$$
Let $\omega \subset M$ be a pre-compact Lipschitz domain. Denoting
$$
	\alpha_m
	=
	\frac{
		1
	}{
		\operatorname{mes} \omega
	}
	\int_\omega
	r_m
	\,
	dV,
$$
we obtain in accordance with Lemma~\ref{l2.2} that the sequence $r_m - \alpha_m$, $m  = 1,2,\ldots$,
is fundamental in $W_p^1 (\omega)$. By Lemma~\ref{l2.1}, this sequence is also fundamental in $W_p^1 (G)$ for any pre-compact Lipschitz domain $G \subset M$. 

At first, we assume that the sequence $\alpha_m$, $m = 1,2,\ldots$, is bounded. Extracting from it a convergent subsequence $\alpha_{i_j}$, $j = 1,2,\ldots$, we have that the sequence of the functions $r_{m_j}$, $j = 1,2,\ldots$, 
is fundamental in $W_p^1 (G)$ for any pre-compact Lipschitz domain $G \subset M$. 
Hence, there exists $v \in W_{p, loc}^1 (M)$ such that
$$
	\| r_{m_j} - v \|_{
		W_p^1 (G)
	}
	\to
	0
	\quad
	\mbox{as }
	j \to \infty
$$
for any pre-compact Lipschitz domain $G \subset M$. In view of~\eqref{pt2.1.1}, we have
$
	\nabla v = {\mathbf r};
$
therefore,
\begin{equation}
	\int_M
	|\nabla (u - v)|^p
	dV
	=
	\inf_{
		\varphi \in C_0^\infty (M)
	}
	\int_M
	|\nabla (u - \varphi)|^p
	dV.
	\label{pt2.1.2}
\end{equation}
Thus, by the variational principle, the function $w = u - v$ belongs to $\mathfrak{H}$.

Let us show the validity of inequality~\eqref{t2.1.1}.
Let $K \subset \partial \Omega$ be some compact set. It is easy to see that
\begin{equation}
	\left.
		v
	\right|_{
		K
	}
	=
	h - w.
	\label{pt2.1.3}
\end{equation}
Take a function $\tau \in C_0^\infty (M)$ equal to one in a neighborhood of $K$.
Putting
$
	\psi_j
	=
	\tau v
	+
	(1 - \tau) r_{m_j},
$
$
	j = 1,2,\ldots,
$
we obtain a sequence of functions from
${\stackrel{\rm \scriptscriptstyle o}{W}\!\!{}_p^1 (M)}$
satisfying the condition
$$
	\left.
		\psi_j
	\right|_{
		K
	}
	=
	h - w,
	\quad
	j = 1,2,\ldots.
$$
In so doing, we obviously have
\begin{align*}
	&
	\int_M
	|\nabla (v - \psi_j)|^p
	dV
	=
	\int_M
	|\nabla ((1 - \tau)(v - r_{m_j}))|^p
	dV
	\\
	&
	\quad
	{}
	\le
	2^p
	\int_{
		\operatorname{supp} \tau
	}
	|\nabla \tau(v - r_{m_j})|^p
	dV
	+
	2^p
	\int_M
	|(1 - \tau) \nabla (v - r_{m_j})|^p
	dV
	\to
	0
	\;
	\mbox{as } j \to \infty,
\end{align*}
whence it follows immediately that
\begin{equation}
	\operatorname{cap}_{h - w} (K)
	\le
	\lim_{j \to \infty}
	\int_M
	|\nabla \psi_j|^p
	dV
	=
	\int_M
	|\nabla v|^p
	dV.
	\label{pt2.1.4}
\end{equation}
In view of the arbitrariness of the compact set $K \subset \partial \Omega$, the last formula implies the estimate
\begin{equation}
	\operatorname{cap}_{h - w} (\partial M)
	\le
	\int_M
	|\nabla v|^p
	dV
	<
	\infty.
	\label{pt2.1.5}
\end{equation}

Now, assume that the sequence $\alpha_m$, $m = 1,2,\ldots$, is not bounded.
Without loss of generality, we can also assume that
$|\alpha_m| \to \infty$ as $m \to \infty$.
If this is not the case, then we replace $\alpha_m$, $m = 1,2,\ldots$, with a suitable subsequence.
Applying Lemma~\ref{l2.2}, we arrive at the inequality
$$
	\int_\omega
	|r_m - \alpha_m|^p
	\,
	dV
	\le
	C
	\int_\omega
	|\nabla r_m|^p
	\,
	dV
$$
for all $m = 1,2,\ldots$, where the constant $C > 0$ does not depend on $m$, whence we have
$$
	\int_\omega
	\left|
		\frac{r_m}{\alpha_m} 
		- 
		1
	\right|^p
	\,
	dV
	\le
	\frac{
		C
	}{
		|\alpha_m|^p
	}
	\int_\omega
	|\nabla r_m|^p
	dV
	\to
	0
	\quad
	\mbox{as } m \to \infty.
$$
For any positive integer $m$ we take a positive integer $s_m \ge m$ such that
\begin{equation}
	\int_\omega
	\left|
		\alpha_m
		-
		\frac{
			\alpha_m
			r_{s_m}
		}{
			\alpha_{s_m}
		}
	\right|^p
	dV
	=
	|\alpha_m|^p
	\int_\omega
	\left|
		\frac{
			r_{s_m}
		}{
			\alpha_{s_m}
		}
		- 
		1
	\right|^p
	dV
	<
	\frac{1}{2^m}
	\label{pt2.1.6}
\end{equation}
and
\begin{equation}
	\left| 
		\frac{
			\alpha_m
		}{
			\alpha_{s_m}
		}
	\right|
	<
	\frac{1}{2^m}.
	\label{pt2.1.7}
\end{equation}
Putting further
$$
	v_m 
	= 
	r_m 
	- 
	\frac{
		\alpha_m
		r_{s_m}
	}{
		\alpha_{s_m}
	},
	\quad
	m = 1,2,\ldots,
$$
we obtain
\begin{align*}
	\int_\omega
	|v_m - v_l|^p
	dV
	\le
	{}
	&
	2^p
	\int_\omega
	|r_m - r_l - \alpha_m + \alpha_l|^p
	dV
	\\
	&
	{}
	+
	2^p
	\int_\omega
	\left|
		\alpha_m
		-
		\frac{
			\alpha_m
			r_{s_m}
		}{
			\alpha_{s_m}
		}
		-
		\alpha_l
		+
		\frac{
			\alpha_l
			r_{s_l}
		}{
			\alpha_{s_l}
		}
	\right|^p
	dV,
	\quad
	m,l = 1,2,\ldots.
\end{align*}
By Lemma~\ref{l2.2}, the estimate
$$
	\int_\omega
	|r_m - r_l - \alpha_m + \alpha_l|^p
	dV
	\le
	C
	\int_\omega
	|\nabla (r_m - r_l)|^p
	dV,
	\quad
	m,l = 1,2,\ldots,
$$
is valid, where the constant $C > 0$ does not depend on $m$ and $l$.
At the same time, condition~\eqref{pt2.1.6} allows us to assert that
\begin{align*}
	&
	\int_\omega
	\left|
		\alpha_m
		-
		\frac{
			\alpha_m
			r_{s_m}
		}{
			\alpha_{s_m}
		}
		-
		\alpha_l
		+
		\frac{
			\alpha_l
			r_{s_l}
		}{
			\alpha_{s_l}
		}
	\right|^p
	dV
	\le
	2^p
	\int_\omega
	\left|
		\alpha_m
		-
		\frac{
			\alpha_m
			r_{s_m}
		}{
			\alpha_{s_m}
		}
	\right|^p
	dV
	\\
	&
	\qquad
	{}
	+
	2^p
	\int_\omega
	\left|
		\alpha_l
		-
		\frac{
			\alpha_l
			r_{s_l}
		}{
			\alpha_{s_l}
		}
	\right|^p
	dV
	<
	\frac{2^p}{2^m}
	+
	\frac{2^p}{2^l},
	\quad
	m,l = 1,2,\ldots.
\end{align*}
Hence, the sequence $v_m$, $m = 1,2,\ldots$, is fundamental in $L_p (\omega)$.
According to Lemma~\ref{l2.1}, this sequence is also fundamental in $W_p^1 (G)$ 
for any pre-compact Lipschitz domain $G \subset M$.
Let us denote by $v$ the limit of this sequence.
In view of~\eqref{pt2.1.1} and \eqref{pt2.1.7}, we have
$$
	\| \nabla v_m - {\mathbf r} \|_{
		L_p (M)
	}
	\to
	0
	\quad
	\mbox{as }
	m \to \infty;
$$
therefore, $v$ satisfies relation~\eqref{pt2.1.2} and in accordance with the variational principle the function $w = u - v$ belongs to $\mathfrak{H}$. 
In so doing, for any compact set $K \subset \partial M$ condition~\eqref{pt2.1.3} is obviously valid.
Thus, putting 
$
	\psi_j
	=
	\tau v
	+
	(1 - \tau) v_j,
$
$	
	j = 1,2,\ldots,
$
where $\tau \in C_0^\infty (M)$ is some function equal to one in a neighborhood of $K$, we obtain
\begin{align*}
	&
	\int_M
	|\nabla (v - \psi_j)|^p
	dV
	=
	\int_M
	|\nabla ((1 - \tau)(v - v_j))|^p
	dV
	\\
	&
	\quad
	{}
	\le
	2^p
	\int_{
		\operatorname{supp} \tau
	}
	|\nabla \tau(v - v_j)|^p
	dV
	+
	2^p
	\int_M
	|(1 - \tau) \nabla (v - v_j)|^p
	dV
	\to
	0
	\quad
	\mbox{as } j \to \infty,
\end{align*}
whence we again arrive at relation~\eqref{pt2.1.4} from which~\eqref{pt2.1.5} follows.

It remains to show that condition~\eqref{t2.1.1} implies the existence of a solution of problem~\eqref{1.1}--\eqref{1.3}.
Let~\eqref{t2.1.1} be valid for some $w \in \mathfrak{H}$.
We take pre-compact Lipschitz domains $\Omega_i \subset \Omega_{i+1}$, $i = 1,2,\ldots$, 
whose union coincides with the entire manifold $M$.
Consider the functions
$
	\varphi_i
	\in 
	{\stackrel{\rm \scriptscriptstyle o}{W}\!\!{}_p^1 (M)}
$ 
such that
$$
	\left.
		\varphi_i
	\right|_{
		\overline{\Omega}_i
		\cap
		\partial M
	}
	=
	h - w
	\quad
	\mbox{and}
	\quad
	\int_M
	|\nabla \varphi_i|^p
	dV
	<
	\operatorname{cap}_{h - w} 
	(
		\overline{\Omega}_i
		\cap
		\partial M
	)
	+
	\frac{1}{2^i},
	\quad
	i = 1,2,\ldots.
$$
In view of~\eqref{t2.1.1}, the sequence $\nabla \varphi_i$, $i = 1,2,\ldots$, is bounded in the space $L_p (M)$. Hence, there exists a subsequence $\nabla \varphi_{i_j}$, $j = 1,2,\ldots$, of this sequence that weakly converges in $L_p (M)$ to some vector-function ${\mathbf r} \in L_p (M)$. 
As above, we denote by $R_m$ the convex hull of the set $\{ \varphi_{i_j} \}_{j \ge m}$.
By Mazur's theorem, there exists a sequence $r_m \in R_m$, $m = 1,2,\ldots$, such that~\eqref{pt2.1.1} holds.
Since the functional
$$
	\varphi 
	\mapsto
	\int_M
	|\nabla \varphi_i|^p
	dV,
	\quad
	\varphi
	\in
	{\stackrel{\rm \scriptscriptstyle o}{W}\!\!{}_p^1 (M)},
$$
is convex, we obtain
\begin{equation}
	\int_M
	|\nabla r_m|^p
	dV
	<
	\operatorname{cap}_{h - w} 
	(
		\partial M
	)
	+
	\frac{1}{2^m},
	\quad
	m = 1,2,\ldots.
	\label{pt2.1.8}
\end{equation}
Also, it can be seen that
\begin{equation}
	\left.
		r_m
	\right|_{
		\overline{\Omega}_m
		\cap
		\partial M
	}
	=
	h - w,
	\quad
	m = 1,2,\ldots.
	\label{pt2.1.9}
\end{equation}
One can assume without loss of generality that $\Omega_1 \cap \partial M \ne \emptyset$.
Thus, we have
$$
	\int_{\Omega_1}
	|\varphi|^p
	dV
	\le
	C
	\int_{\Omega_1}
	|\nabla \varphi|^p
	dV
$$
for all
$
	\varphi 
	\in 
	{\stackrel{\rm \scriptscriptstyle o}{W}\!\!{}_p^1 
		(
			\Omega_1, 
			\overline{\Omega}_1 \cap \partial M
		)
	},
$
where the constant $C > 0$ does not depend on $\varphi$.
In particular,
$$
	\int_{\Omega_1}
	|r_i - r_j|^p
	dV
	\le
	C
	\int_{\Omega_1}
	|\nabla (r_i - r_j)|^p
	dV
$$
for all $i, j = 1,2,\ldots$, whence it follows that the sequence $r_i$, $i = 1,2,\ldots$, is fundamental in $L_p (\Omega_1)$.
Lemma~\ref{l2.1} implies that this sequence is also fundamental in $W_p^1 (G)$ for any pre-compact Lipschitz domain $G \subset M$. Let us denote by $u_1$ the limit of this sequence. In view of~\eqref{pt2.1.8} and~\eqref{pt2.1.9}, we obtain
\begin{equation}
	\int_M
	|\nabla u_1|^p
	dV
	<
	\operatorname{cap}_{h - w} 
	(
		\partial M
	)
	\label{pt2.1.11}
\end{equation}
and
\begin{equation}
	\left.
		u_1
	\right|_{
		\partial M
	}
	=
	h - w.
	\label{pt2.1.10}
\end{equation}

Let us construct a solution of problem~\eqref{1.1}--\eqref{1.3}.
This time we take a sequence of functions
$
	\varphi_i
	\in
	{C_0^\infty (M \setminus \partial M)},
$ 
$i = 1,2,\ldots$,
such that
$$
	\int_M
	|\nabla (u_1 + w - \varphi_i)|^p
	dV
	\to
	\inf_{
		\varphi
		\in
		C_0^\infty (M \setminus \partial M)
	}
	\int_M
	|\nabla (u_1 + w - \varphi)|^p
	dV
	\quad
	\mbox{as }
	i \to \infty.
$$
By~\eqref{pt2.1.11}, the sequence $\nabla \varphi_i$, $i = 1,2,\ldots$, is bounded in $L_p (\Omega)$. Thus, it has a subsequence $\nabla \varphi_{i_j}$, $j = 1,2,\ldots$, that weakly converges in $L_p (M)$ to some vector-function ${\mathbf r} \in L_p (M)$.
According to Mazur's theorem, there exists a sequence $r_m \in R_m$, $m = 1,2,\ldots$, satisfying relation~\eqref{pt2.1.1}. Since $r_m \in C_0^\infty (M \setminus \partial M)$, $m = 1,2,\ldots$, this sequence is fundamental in $W_p^1 (G)$ for any pre-compact domain $G \subset M$. Denoting by $u_0$ the limit of this sequence, we have
$$
	\left.
		u_0
	\right|_{
		\partial M
	}
	=
	0
	\quad
	\mbox{and}
	\quad
	\int_M
	|\nabla (u_1 + w - u_0)|^p
	dV
	=
	\inf_{
		\varphi
		\in
		C_0^\infty (M \setminus \partial M)
	}
	\int_M
	|\nabla (u_1 + w - \varphi)|^p
	dV.
$$
To complete the proof, it remains to note that, in view of~\eqref{pt2.1.10} and the variational principle, the function $u = u_1 + w - u_0$ is a solution of~\eqref{1.1}--\eqref{1.3}.
\end{proof}


\end{document}